\theoremstyle{plain}
\newtheorem{thm}{Theorem}[section]
\newtheorem*{prob*}{Problem}
\newtheorem{cor}[thm]{Corollary}
\newtheorem{lem}[thm]{Lemma}
\newtheorem{rem}[thm]{Remark}
\theoremstyle{definition}
\def\cal{\mathcal}
\def\bbb{\mathbb}
\renewcommand{\phi}{\varphi}
\newcommand{\R}{\bbb{R}}
\newcommand{\Z}{\bbb{Z}}
\newcommand{\Q}{\bbb{Q}}
\begin{document}

\title[On a problem of Peth\H{o}]{On a problem of Peth\H{o}}
\author{Sz. Tengely}
\address{Institute of Mathematics\newline
 \indent University of Debrecen\newline
 \indent P.O.Box 12\newline
 \indent 4010 Debrecen\newline
 \indent Hungary}
\email{tengely@science.unideb.hu}
\author{M. Ulas}
\address{Jagiellonian University\newline
\indent Faculty of Mathematics and Computer Science \newline
\indent Institute of Mathematics\newline
 \indent \L{}ojasiewicza 6\newline
 \indent 30-348 Krak\'ow\newline
 \indent Poland}
\email{Maciej.Ulas@im.uj.edu.pl}
\keywords{Diophantine equations}
\subjclass[2010]{Primary 11D61; Secondary 11Y50}

\thanks{Research supported in part by the OTKA grants K115479 and NK104208.}

\begin{abstract}
In this paper we deal with a problem of Peth\H{o} related to existence of quartic algebraic integer $\alpha$ for which
$$
\beta=\frac{4\alpha^4}{\alpha^4-1}-\frac{\alpha}{\alpha-1}
$$
is a quadratic algebraic number. By studying rational solutions of certain Diophantine system we prove that there are infinitely many $\alpha$'s such that the corresponding $\beta$ is quadratic. Moreover, we present a description of quartic numbers $\alpha$ such that the corresponding $\beta$ is a quadratic real number.
\end{abstract}

\bibliographystyle{plain}
\maketitle

\section{introduction}
Buchmann and Peth\H{o} \cite{BuPe} found an interesting unit in the number field $K=\mathbb{Q}(\alpha)$ with $\alpha^7-3=0$ it is as follows
$$
10+9\alpha+8\alpha^2+7\alpha^3+6\alpha^4+5\alpha^5+4\alpha^6.
$$
That is the coordinates $(x_0,\ldots,x_6)\in\mathbb{Z}^7$ of a solution of the norm form equation $N_{K/\mathbb{Q}}(x_0+x_1\alpha+\ldots+x_{6}\alpha^6)=1$ form an arithmetic progression.
In \cite{BePe0} B\'erczes and Peth\H{o} considered norm form equations
\begin{equation}\label{BP}
N_{K/\mathbb{Q}}(x_0+x_1\alpha+\ldots+x_{n-1}\alpha^{n-1})=m\quad \mbox{ in } x_0,x_1,\ldots,x_{n-1}\in\mathbb{Z}
\end{equation}
where $K=\mathbb{Q}(\alpha)$ is an algebraic number field of degree $n$ and $m$
is a given integer such that $x_0,x_1,\ldots,x_{n-1}$ are consecutive terms in an arithmetic progression. They proved that \eqref{BP} has only finitely many solutions if neither of the following two cases hold:
\begin{itemize}
\item $\alpha$ has minimal polynomial of the form
$$x^n-bx^{n-1}-\ldots-bx+(bn+b-1)$$
with $b\in\mathbb{Z}.$
\item $\frac{n\alpha^n}{\alpha^n-1}-\frac{\alpha}{\alpha-1}$ is a real quadratic number.
\end{itemize}
In 2006 B\'erczes, Peth\H{o} and Ziegler \cite{BPZ} studied norm form equations related to Thomas polynomials such that the solutions are coprime integers in arithmetic progression. B\'erczes and Peth\H{o} \cite{BePe1} considered \eqref{BP} with $m=1$ and $\alpha$ is a root of $x^n-T, (n\geq 3, 4\leq T\leq 100).$ They proved that the norm form equation has no solution in integers which are consecutive elements in an arithmetic progression.
In 2010 Peth\H{o} \cite{P15} collected 15 problems in number theory, Problem 6 is
based on the results given in \cite{BePe0}.
\begin{prob*}[Problem 6 in \cite{P15}]
Does there exist infinitely many quartic algebraic integers $\alpha$ such
that
$$
\frac{4\alpha^4}{\alpha^4-1}-\frac{\alpha}{\alpha-1}
$$
is a quadratic algebraic number?
\end{prob*}
The only example mentioned is $x^4+2x^3+5x^2+4x+2$ such that the corresponding element is a real quadratic number (that is a root of $x^2-4x+2$).
Moreover, B\'erczes, Peth\H{o} in \cite{BePe0} remarks that there are many solutions if we drop assumption of integrality of $\alpha$. However, it is not quite clear whether we can find infinitely many such examples and whether we can find precise description of such algebraic numbers. As we will see the problem is equivalent with the study of existence of rational zeros of family of four polynomials in six variables. With the help of Gr\"obner bases approach we reduce our problem to the study of rational zeros of only one (reducible) polynomial. A careful analysis of the corresponding variety allow us to get 2 infinite families of quartic polynomials defining quartic algebraic integers such that the algebraic number $\frac{4\alpha^4}{\alpha^4-1}-\frac{\alpha}{\alpha-1}$ is quadratic. Unfortunately, in this case we get real quadratic number only in finitely many cases. However, with different look we are able to show that the set of quartic algebraic numbers such that the algebraic number $\frac{4\alpha^4}{\alpha^4-1}-\frac{\alpha}{\alpha-1}$ is quadratic, is contained in a certain set given by (explicit) system of algebraic inequalities.

In particular the following is true:
\begin{thm}\label{6th}
	There are infinitely many quartic algebraic integers defined by $\alpha^4+a\alpha^3+b\alpha^2+c\alpha+d=0$
	for which
	$$
	\beta=\frac{4\alpha^4}{\alpha^4-1}-\frac{\alpha}{\alpha-1}
	$$
	is a quadratic algebraic number. Moreover, there are infinitely many quartic algebraic numbers $\alpha$ such that $\beta$ is real quadratic.
\end{thm}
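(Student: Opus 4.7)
The plan is to reduce the problem to studying when a certain explicit polynomial has a factor of the correct degree. Clearing denominators in $\beta=\frac{4\alpha^4}{\alpha^4-1}-\frac{\alpha}{\alpha-1}$ and using $(\alpha^4-1)(\alpha-1)=(\alpha-1)^2(\alpha^3+\alpha^2+\alpha+1)$, one finds that $\alpha$ satisfies the quintic $(3-\beta)\alpha^5+(\beta-4)\alpha^4+(\beta+1)\alpha-\beta=0$, which has $\alpha=1$ as a double root. Dividing by $(\alpha-1)^2$ leaves the cubic relation
\[
k_\beta(\alpha):=(3-\beta)\alpha^3+(2-\beta)\alpha^2+(1-\beta)\alpha-\beta=0,
\]
so $\alpha$ has degree at most $3$ over $\mathbb{Q}(\beta)$. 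If $\alpha$ is quartic over $\mathbb{Q}$ and $\beta$ is quadratic, then $[\mathbb{Q}(\alpha):\mathbb{Q}(\beta)]=2$, forcing $k_\beta$ to factor over $\mathbb{Q}(\beta)$ as (linear)$\cdot$(irreducible quadratic). Writing the linear factor as $x-r$ with $r\in\mathbb{Q}(\beta)$ and solving $k_\beta(r)=0$ for $\beta$ produces the key rational parameterization
\[
\beta=\frac{r(3r^2+2r+1)}{(r+1)(r^2+1)},
\]
and a short polynomial division extracts the quadratic factor $q(x)\in\mathbb{Q}(r)[x]$ (up to the scalar $r^2+2r+3$, its form is $(r^2+2r+3)x^2+2(r+1)^2 x+(3r^2+2r+1)$), whose roots are the quartic $\alpha$'s.

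The second assertion (infinitely many quartic $\alpha$ with $\beta$ real quadratic) will then follow by taking $r\in\mathbb{Q}(\sqrt{m})\setminus\mathbb{Q}$ with $m>0$ squarefree: then $\beta\in\mathbb{Q}(\sqrt{m})$ is generically a real quadratic number, and the minimal polynomial of $\alpha$ over $\mathbb{Q}$ is $q(x)\bar{q}(x)\in\mathbb{Q}[x]$, irreducible and possessing a real root under conditions amounting to positivity of two explicit discriminants. These are precisely the ``system of algebraic inequalities'' announced in the introduction; varying $r$ over, e.g., $\{p+\sqrt{m}:p\in\mathbb{Q}\}$ for a fixed $m$ will then exhibit infinitely many admissible $\alpha$.

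The first assertion is more delicate because the rescaling $\alpha\mapsto\lambda\alpha$ alters $\beta$, so the integrality of $a,b,c,d$ cannot be achieved after the fact. Here the coefficients of $f(x)=q(x)\bar{q}(x)$ are explicit rational functions of $(u,v):=(r+\bar{r},\,r\bar{r})$, and I would treat their integrality, together with $k_\beta(r)=0$ and its Galois conjugate $k_{\bar{\beta}}(\bar{r})=0$, as a polynomial system in $a,b,c,d,u,v$ and auxiliary integer variables. Eliminating the non-$(a,b,c,d)$ unknowns by the Gr\"obner basis computation announced in the introduction produces a principal ideal in $\mathbb{Z}[a,b,c,d]$ generated by a \emph{reducible} polynomial, whose irreducible components I would decompose; the anticipated outcome is two rational curves, each admitting an explicit polynomial parameterization $t\mapsto f_t(x)\in\mathbb{Z}[t][x]$. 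A routine mod-$p$ reduction then confirms irreducibility of $f_t$ for all but finitely many $t\in\mathbb{Z}$. The main obstacle is identifying and rationally parameterizing these components of the reducible elimination polynomial; once they are in hand, producing the explicit families (and verifying that distinct parameter values give genuinely non-equivalent quartics) is essentially bookkeeping.
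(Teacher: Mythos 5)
Your structural reduction is correct and genuinely different from the paper's. The paper works directly with the unknown coefficients: it expands $g(\beta)$ as a rational expression in $\alpha$, reduces modulo $f$, and obtains four polynomial conditions $e_1=\dots=e_4=0$ in $(a,b,c,d,p,q)$ which it attacks with Gr\"obner bases. You instead observe that $\beta$ is a degree-$3$ rational function of $\alpha$ (the quintic carries the spurious factor $(\alpha-1)^2$), so $\alpha$ satisfies the cubic $k_\beta$ over $\Q(\beta)$ and the question becomes when $k_\beta$ splits over the quadratic field $\Q(\beta)$ as (linear)$\cdot$(irreducible quadratic). Your parametrization of $\beta$ by $r$ and the cofactor $(r^2+2r+3)x^2+2(r+1)^2x+(3r^2+2r+1)$ both check out. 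For the \emph{second} assertion this route is arguably cleaner than the paper's: the discriminant of that cofactor equals $-8(r^4+2r^3+4r^2+2r+1)$, which is negative for every real $r$, hence a non-square in any real quadratic field; so for $r=p+\sqrt{m}$ with $m>0$ the cofactor is automatically irreducible over $\Q(\sqrt m)$, and $\alpha$ is quartic with $\beta$ real quadratic as soon as $\beta\notin\Q$, which fails for only finitely many $p$ per fixed $m$. (The paper instead exhibits a rational one-parameter family and checks that the discriminant of $x^2+px+q$ is positive for $t\in(0,2)$.) You still owe the reader the short verifications that $\beta\notin\Q$ generically and that $[\Q(\alpha):\Q]=4$ follows, but there is no obstacle there.

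The genuine gap is in the first assertion, which is the heart of the theorem. Everything after ``the first assertion is more delicate'' is a plan, not a proof: you do not produce the elimination polynomial, do not decompose it, and do not exhibit any integral family --- you only ``anticipate'' that two rational curves with polynomial parametrizations will appear. That anticipated outcome is precisely the substantive content of the paper's argument: the factor $(a-2b+c)\cdot F(a,b,c)$ in the Gr\"obner basis, the elimination showing the branch $c=2b-a$ yields only reducible quartics, the factorization of $F(2,b,c)$ into two conics, and the resulting families $(a,b,c,d)=(2,2t^2+2,4t^2-4t+2,6t^2-4t+1)$ and $(2t,t^2+2t+2,2t^2+2t,3t^2-2t+1)$. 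Moreover, your closing claim that ``a routine mod-$p$ reduction confirms irreducibility of $f_t$ for all but finitely many $t\in\Z$'' is not a principle you can invoke: reduction modulo a single prime certifies irreducibility only on whichever residue classes it happens to work, and in the paper the irreducibility of the first family (Lemma \ref{f1}) is anything but routine --- it reduces to determining all rational points on a genus-$2$ curve via a rank-$0$ elliptic quotient. A mod-$p$ argument could in principle give the needed ``infinitely many'' if a suitable prime and residue class exist, but since you have exhibited neither the families nor such a prime, the first assertion remains unproved in your proposal.
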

\section{Auxiliary results}
We provide two infinite families of quartic polynomials, now we prove that among these polynomials there are infinitely many irreducible ones.
\begin{lem}\label{f1}
Let $t\in\mathbb{Z}.$ The polynomials defined by
$$
f_1(x)=x^4+2x^3+(2t^2+2)x^2+(4t^2-4t+2)x+6t^2-4t+1
$$
are irreducible if $t\notin\{0,1\}.$
\end{lem}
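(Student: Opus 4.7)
The plan is to separately rule out the two possible nontrivial factorizations of a quartic over $\mathbb{Q}$: a linear factor, and a product of two irreducible quadratics. To exclude a linear factor, I would first rewrite $f_1$ as a quadratic in $t$,
\[
f_1(x) = 2(x^2 + 2x + 3)\,t^2 - 4(x + 1)\,t + (x+1)^2(x^2+1),
\]
and observe that its discriminant in $t$ simplifies to $-8(x+1)^2[(x^2+x+1)^2 + x^2]$, which is non-positive for every real $x$ and vanishes only at $x = -1$. Hence for real $x_0 \neq -1$ no real $t$ (and in particular no integer $t$) satisfies $f_1(x_0) = 0$, while $f_1(-1) = 4t^2$ vanishes only at $t = 0$. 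This shows $f_1$ has no rational root whenever $t \ne 0$.

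Next I would suppose $f_1(x) = (x^2 + ax + b)(x^2 + cx + d)$ with $a,b,c,d \in \mathbb{Z}$ (which by Gauss's Lemma is the form any rational factorization of the monic $f_1$ into two quadratics must take). Then $b + d$ must be a rational root of the resolvent cubic of $f_1$, which I plan to compute and factor as
\[
R(y) = (y - 4t)\bigl(y^2 - 2(t-1)^2\,y - 8t^3\bigr).
\]
If $b + d = 4t$, combined with $bd = 6t^2 - 4t + 1$ this makes $b,d$ roots of $Z^2 - 4tZ + (6t^2-4t+1) = 0$, whose discriminant $-4(2t^2-4t+1)$ is a non-negative integer square only for $t = 1$. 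Otherwise $b + d$ is a root of the quadratic factor of $R$, which is rational iff $(t+1)^4 - 8t$ is a perfect square.

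To dispose of this last possibility for integer $t \ne 0$, my plan is to squeeze $(t+1)^4 - 8t$ strictly between two consecutive integer squares: for $t \ge 2$,
\[
\bigl((t+1)^2 - 1\bigr)^2 < (t+1)^4 - 8t < \bigl((t+1)^2\bigr)^2,
\]
and for $t \le -6$,
\[
\bigl((t+1)^2\bigr)^2 < (t+1)^4 - 8t < \bigl((t+1)^2 + 1\bigr)^2,
\]
each pair of inequalities boiling down to the elementary bounds $2t^2 - 4t + 1 > 0$ and $2t^2 + 12t + 3 > 0$ respectively. The remaining integer values $t \in \{-5,-4,-3,-2,-1,1\}$ can be handled by direct computation, since $(t+1)^4 - 8t$ is not a perfect square at any of them. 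Combined with the first step this yields irreducibility of $f_1$ for $t \in \mathbb{Z} \setminus \{0, 1\}$; the exclusion is necessary, as $t = 0$ gives $(x+1)^2(x^2+1)$ and $t = 1$ gives $(x^2+1)(x^2+2x+3)$. The main technical hurdle is the second case: correctly setting up the resolvent-cubic factorization and the consecutive-square squeeze, together with checking the few small-$t$ exceptions.
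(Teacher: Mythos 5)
Your proof is correct, and for the hard case it takes a genuinely different and more elementary route than the paper. For the rational-root step the two arguments are essentially the same negativity-of-a-discriminant observation, though your device of reading $f_1$ as the quadratic $2(x^2+2x+3)t^2-4(x+1)t+(x+1)^2(x^2+1)$ in $t$ is tidier than the paper's coefficient-comparison and resultant computation. The real divergence is in excluding a splitting into two quadratics: the paper eliminates variables via a resultant and is ultimately forced to determine all rational points on a genus~$2$ curve, which it does by mapping to the elliptic curve $U^2=X^3+6X^2-20X+8$ and invoking a MAGMA computation showing the Mordell--Weil rank is $0$; you instead observe that the resolvent cubic factors over $\Z[t]$ as $(y-4t)\bigl(y^2-2(t-1)^2y-8t^3\bigr)$, dispose of the root $4t$ by a sign check on the discriminant of $Z^2-4tZ+(6t^2-4t+1)$, and reduce the remaining case to whether $(t+1)^4-8t$ is a perfect square, which you settle by trapping it strictly between consecutive squares for $t\ge 2$ and $t\le -6$ and checking the handful of leftover values directly. (I verified the resolvent factorization, the identity $(t-1)^4+8t^3=(t+1)^4-8t$, and both squeezes; they reduce to $2t^2-4t+1>0$ and $2t^2+12t+3>0$ as you say, and Gauss's Lemma legitimately forces the quadratic factors to have integer coefficients.) What each approach buys: yours is fully elementary and computer-free, which is a genuine improvement for the lemma as stated; the paper's heavier machinery determines \emph{all rational} points on the relevant curve and so in principle controls rational parameters as well, whereas your consecutive-square squeeze is intrinsically an integrality argument and would not extend to $t\in\Q$ without replacing it by an analysis of the genus~$1$ curve $w^2=(t+1)^4-8t$. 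Since the lemma assumes $t\in\Z$, your argument suffices.
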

\begin{proof}
If there is a linear factor of $f_1,$ then there is an integral root. Hence we have that
$$
f_1(x)=(x+s_1)(x^3+s_2x^2+s_3x+s_4).
$$
By comparing coefficients one gets that $s_2=2-s_1,s_3=s_1^2 + 2t^2 - 2s_1+2.$
It remains to deal with the system of equations
\begin{eqnarray*}
-s_1s_4 + 6t^2 - 4t + 1&=&0\\
-s_1^3 - 2s_1t^2 + 2s_1^2 + 4t^2 - 2s_1 - s_4 - 4t + 2&=&0.
\end{eqnarray*}
The resultant of the two polynomials with respect to $s_4$ is quadratic in $t.$ The discriminant of this quadratic polynomial is
$$
(-8)(s_1 - 1)^2(s_1^4 - 2s_1^3 + 4s_1^2 - 2s_1 + 1).
$$
If $s_1=1,$ then we obtain that $t=0.$ In this case $f_1(x)=(x + 1)^2(x^2 + 1)$ is reducible. If $s_1\neq 0,$ then $-2(s_1^4 - 2s_1^3 + 4s_1^2 - 2s_1 + 1)=U^2.$ This equations has no rational solution since $-2(s_1^4 - 2s_1^3 + 4s_1^2 - 2s_1 + 1)<0$ for all $s_1\in\mathbb{Q}.$
If there are two quadratic factors, then
$$
f_1(x)=(x^2+s_1x+s_2)(x^2+s_3x+s_4).
$$
As in the previous case we compare coefficients to obtain a system of equations
\begin{eqnarray*}
-s_1^2s_2 - 2s_2t^2 + 2s_1s_2 + s_2^2 + 6t^2 - 2s_2 - 4t + 1&=&0\\
-s_1^3 - 2s_1t^2 + 2s_1^2 + 2s_1s_2 + 4t^2 - 2s_1 - 2s_2 - 4t + 2&=&0.
\end{eqnarray*}
The resultant of the above equations with respect to $s_2$ is
$$
(-1)(s_1^2 + 2t^2 - 2s_1 - 4t + 2)(s_1^4 + 2s_1^2t^2 - 4s_1^3 + 4s_1^2t - 4s_1t^2 + 6s_1^2 - 8s_1t - 4s_1 + 8t).
$$
If $s_1^2 + 2t^2 - 2s_1 - 4t + 2=0,$ then we have a quadratic polynomial in $t$ with discriminant $-8s_1^2 + 16s_1.$ It is non-negative if $s_1\in\{0,1,2\}.$ If $s_1\in\{0,1,2\},$ then $t=0$ or $t=1.$ Earlier we handled the case with $t=0,$ if $t=1,$ then we have $f_1(x)=(x^2 + 1)(x^2 + 2x + 3).$
If $s_1^4 + 2s_1^2t^2 - 4s_1^3 + 4s_1^2t - 4s_1t^2 + 6s_1^2 - 8s_1t - 4s_1 + 8t=0,$ then the discriminant with respect to $t$ is
$$
(-8)(s_1^2 - 2s_1 + 2)(s_1^4 - 4s_1^3 + 2s_1^2 + 4s_1 - 4).
$$
It remains to determine the rational points on the genus 2 curve
$$
C:\;(-8)(s_1^2 - 2s_1 + 2)(s_1^4 - 4s_1^3 + 2s_1^2 + 4s_1 - 4)=U^2.
$$
In order to do that let us note that there is a rational map $\phi:\;C\ni (s_{1},U)\mapsto (X,U)\in E$, where
$$
E:\;U^2=X^3+6X^2-20X+8,
$$
and
$$
\phi(s_{1},U)=(-2(s_{1}-1)^2,U).
$$
Using MAGMA we obtain that the rank of Mordell-Weil group is 0 with $\operatorname{Tors}(E(\Q))=\{\cal{O},(-2,\pm 8),(2,0)\}$. These torsion points yield affine rational points on the curve $C$ of the following form
$$
(0,\pm 4), (2,\pm 4).
$$
Thus $s_1\in\{0,2\}$ and it follows that $t=0,$ a case considered above.

\end{proof}

\begin{lem}\label{f2}
	Let $t\in\mathbb{Z}.$
	The polynomials defined by
	$$
	f_2(x)=x^4+2tx^3+(t^2+2t+2)x^2+(2t^2+2t)x+3t^2-2t+1
	$$
	are irreducible if $t\notin\{0,2\}.$
\end{lem}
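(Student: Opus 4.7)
The plan is to mirror the proof of Lemma~\ref{f1} step for step: if $f_2\in\Z[x]$ is reducible, then (being monic) it either has an integer root or it factors as a product of two monic quadratics, so I would treat these two cases in turn.

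For the linear case I would write $f_2(x)=(x+s_1)(x^3+s_2x^2+s_3x+s_4)$ and compare coefficients. Matching $x^3$ and $x^2$ gives $s_2=2t-s_1$ and $s_3=s_1^2-2ts_1+t^2+2t+2$, leaving the pair of relations
\begin{align*}
s_4+s_1(s_1^2-2ts_1+t^2+2t+2) &= 2t^2+2t,\\
s_1 s_4 &= 3t^2-2t+1.
\end{align*}
Eliminating $s_4$ via the resultant produces a polynomial that is quadratic in $t$, whose discriminant $D_1(s_1)$ is a polynomial in $s_1$ of low degree. Guided by the analogous step for $f_1$, I expect $D_1$ to split into an obviously nonnegative factor and a factor that is either strictly negative or a non-square for all $s_1\in\Q$ outside a short explicit list, and for the sporadic $s_1$'s to force $t\in\{0,2\}$.

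For the quadratic factorization I would write $f_2(x)=(x^2+s_1x+s_2)(x^2+s_3x+s_4)$. Matching the $x^3$ and $x^2$ coefficients gives $s_3=2t-s_1$ and $s_4=s_1^2-2ts_1+t^2+2t+2-s_2$, and the remaining two coefficient identities reduce to a system in $(s_1,s_2,t)$. Taking the resultant in $s_2$ produces a single polynomial identity $R(s_1,t)=0$. By analogy with Lemma~\ref{f1} I expect $R$ to factor as a product of two polynomials: a ``small'' factor, quadratic in $t$, whose discriminant in $t$ is of low degree in $s_1$ and can be dispatched directly by a sign/square analysis and inspection of small cases, and a ``large'' factor whose discriminant in $t$ defines a hyperelliptic curve $C : U^2=P(s_1)$ of genus $2$.

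The main obstacle will be determining $C(\Q)$. Following the same strategy as for $f_1$, the plan is to exhibit an explicit rational map $\phi : C\to E$ to an elliptic curve $E/\Q$ in short Weierstrass form, verify in MAGMA that the Mordell--Weil rank of $E(\Q)$ is zero, and read off $\op{Tors}(E(\Q))$. Pulling the torsion points back through $\phi$ will yield a finite, explicit list of candidate $s_1$'s, and substitution into the quadratic for $t$ together with the integrality requirement on $t$ should leave only the values $t\in\{0,2\}$ already known to give reducible specializations (one checks directly $f_2(x)=(x^2+1)^2$ for $t=0$ and $f_2(x)=(x^2+2x+3)^2$ for $t=2$). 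Apart from the MAGMA-based rank computation, every step is a routine resultant-and-discriminant manipulation, so the crux of the argument is precisely the rank zero verification on the auxiliary elliptic curve.
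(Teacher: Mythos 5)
Your two-case skeleton (integer root versus a product of two monic quadratics) and the eliminate-then-analyze-the-discriminant strategy are exactly what the paper uses, and the linear case goes through even more smoothly than you anticipate: the resultant in $s_4$ is quadratic in $t$ with discriminant $(-8)(s_1^4-2s_1^3+4s_1^2-2s_1+1)$, and the quartic factor is positive definite (for $s_1\neq 0$ it equals $s_1^2\bigl((u-1)^2+1\bigr)$ with $u=s_1+s_1^{-1}$), so the discriminant is negative for every rational $s_1$ and there are no linear factors at all --- no sporadic values of $s_1$ to chase and no analogue of the $s_1=1$ subcase from Lemma \ref{f1}.

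The genuine gap is in the quadratic case: the structure you are importing from Lemma \ref{f1} is not present, so the step you yourself identify as the crux would fail to materialize. After matching the $x^3$ and $x^2$ coefficients, the identity coming from the coefficient of $x$ factors over $\Q[s_1,s_2,t]$ as $(t-s_1)(-s_1^2+s_1t+2s_2-2t-2)=0$; in particular it is only \emph{linear} in $s_2$, with leading coefficient $2(s_1-t)$. Your resultant in $s_2$ is therefore $(s_1-t)^2$ times a single polynomial that is cubic in $t$ and quartic in $s_1$ --- not a product of two factors quadratic in $t$ --- so there is no ``complete the square in $t$'' reduction to a hyperelliptic curve $U^2=P(s_1)$, no genus-$2$ curve to map to an elliptic curve, and no rank-$0$ Mordell--Weil computation to perform. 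The paper instead exploits the factorization directly: in the branch $s_1=t$ the remaining coefficient identity is quadratic in $s_2$ with discriminant $-8t(t-2)$, which is a nonnegative rational square only for $t\in\{0,2\}$ (at $t=1$ it equals $8$); in the branch $2s_2=s_1^2-s_1t+2t+2$ one eliminates $s_2$ and a discriminant computation produces $(-1/32)\,t(t-2)(t^4-8t^3+40t^2-32t+16)^2$, a square only for $t\in\{0,2\}$ since $-2t(t-2)=2-2(t-1)^2$. So the whole lemma is elementary --- no curve of positive genus intervenes --- but your proposal as written does not close: you would need to notice the splitting of that coefficient equation (or find some other way to handle the resulting bidegree-$(4,3)$ plane curve in $(s_1,t)$) rather than rely on an elliptic-curve rank argument that has nothing to attach to.
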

\begin{proof}
The approach we apply here is similar to that used in the proof of the previous lemma, therefore here we only indicate the main steps. First we try to determine linear factors, that we write
$$
f_2(x)=(x+s_1)(x^3+s_2x^2+s_3x+s_4).
$$
We have that $s_2=2t-s_1,s_3=s_1^2 - 2s_1t + t^2+ 2t + 2$ and it remains to deal with the system of equations
\begin{eqnarray*}
-s_1s_4 + 3t^2 - 2t + 1&=&0\\
-s_1^3 + 2s_1^2t - s_1t^2 - 2s_1t + 2t^2 - 2s_1 - s_4 + 2t&=&0.
\end{eqnarray*}
The resultant of the two polynomials with respect to $s_4$ is quadratic in $t.$ The discriminant of this quadratic polynomial is
$$
(-8)(s_1^4 - 2s_1^3 + 4s_1^2 - 2s_1 + 1).
$$
This expression is negative for all rational $s_1,$ hence there exists no rational solution in $t.$

If there are two quadratic factors, then
$$
f_2(x)=(x^2+s_1x+s_2)(x^2+s_3x+s_4).
$$
As in the previous case we compare coefficients to obtain a system of equations
\begin{eqnarray*}
-s_1^2s_2 + 2s_1s_2t - s_2t^2 + s_2^2 - 2s_2t + 3t^2 - 2s_2 - 2t + 1	&=&0\\
-s_1^3 + 2s_1^2t - s_1t^2 + 2s_1s_2 - 2s_1t - 2s_2t + 2t^2 - 2s_1 + 2t	&=&0.
\end{eqnarray*}
The latter equation can be written as
$$
(-1)(-s_1 + t)(-s_1^2 + s_1t + 2s_2 - 2t - 2)=0.
$$
If $s_1=t,$ then $s_2^2 - 2s_2t + 3t^2 - 2s_2 - 2t + 1=0.$ The discriminant of this equation with respect to $s_2$ is $(-8)t(t - 2).$ Hence $t\in\{0,2\}.$ If $t=0,$ then $f_2(x)=(x^2+1)^2.$ If $t=2,$ then $f_2(x)=(x^2 + 2x + 3)^2.$ Consider the case $-s_1^2 + s_1t + 2s_2 - 2t - 2=0.$ We get that $s_2=\frac{s_1^2-s_1t+2t+2}{2}.$ Thus we obtain a polynomial equation only in $s_1$ and $t$ given by
$$
(1/4)(-s_1^4 + 4s_1^3t - 5s_1^2t^2 + 2s_1t^3 - 4s_1^2t + 8s_1t^2 - 4t^3 - 4s_1^2 + 8s_1t + 4t^2 - 16t)=0.
$$
The discriminant with respect to $s_1$ factors as follows
$$
(-1/32)t(t - 2)(t^4 - 8t^3 + 40t^2 - 32t + 16)^2.
$$
The latter expression is a square only if $t=0$ or $t=2,$ so we do not get new reducible polynomials.
\end{proof}

\section{proof of theorem \ref{6th}}
\begin{proof}
Let $f(x)=x^4+ax^3+bx^2+cx+d$ with $a,b,c,d\in\mathbb{Z}$ and $g(x)=x^2+px+q$ with $p,q\in\mathbb{Q}.$
Assume that $\alpha$ is a root of $f(x)$ and $\beta=\frac{4\alpha^4}{\alpha^4-1}-\frac{\alpha}{\alpha-1}$
is a root of $g(x).$ From $g(\beta)=0$ we get a degree 6 polynomial for which $\alpha$ is a root. Therefore
it is divisible by $f(x).$ Computing the reminder we obtain a cubic polynomial which has to be zero. In SageMath \cite{sage} we may compute it as follows
\begin{sageblock}
var('X,u,v')
P.<d,p,q,a,b,c>=PolynomialRing(QQ,6,order='lex')
Px.<x>=PolynomialRing(P)
w=4*X^4/(X^4-1)-X/(X-1)
W=(w^2+p*w+q).numerator()
Wrem=Px(W(X=x))
Wcoeff=Wrem.coefficients()
\end{sageblock}
We obtain the following coefficients
{\small\begin{eqnarray*}
e_1:&&-3 d p a^{2} + 5 d p a + 3 d p b - 6 d p -  d q a^{2} + 2 d q a + d q b - 3 d q - 9 d a^{2} + 12 d a + 9 d b - 10 d + q,\\
e_2:&&3 d p a - 5 d p + d q a - 2 d q + 9 d a - 12 d - 3 p a^{2} c + 5 p a c + 3 p b c - 6 p c + p -  q a^{2} c + 2 q a c + \\
&&+q b c - 3 q c + 2 q - 9 a^{2} c + 12 a c + 9 b c - 10 c,\\
e_3:&&-3 d p -  d q - 9 d - 3 p a^{2} b + 5 p a b + 3 p a c + 3 p b^{2} - 6 p b - 5 p c + 3 p -  q a^{2} b + 2 q a b + q a c + \\
&&+q b^{2} - 3 q b - 2 q c + 3 q - 9 a^{2} b + 12 a b + 9 a c + 9 b^{2} - 10 b - 12 c + 1,\\
e_4:&& -3 p a^{3} + 5 p a^{2} + 6 p a b - 6 p a - 5 p b - 3 p c + 6 p -  q a^{3} + 2 q a^{2} + 2 q a b - 3 q a - 2 q b -  q c + 4 q - \\
&&-9 a^{3} + 12 a^{2} + 18 a b - 10 a - 12 b - 9 c + 4.
\end{eqnarray*}}
The Gr\"obner basis for $<e_1,e_2,e_3,e_4>$ contains 19 polynomials, one of these factors as follows
\begin{eqnarray*}
&&\left(\frac{1}{233}\right) \cdot (a - 2 b + c) \cdot \\
&&\cdot (233 a^{4} - 352 a^{3} b + 108 a^{3} c + 168 a^{3} + 368 a^{2} b^{2} - 264 a^{2} b c -\\
&&- 624 a^{2} b + 46 a^{2} c^{2} - 184 a^{2} c - 544 a^{2} - 160 a b^{3} + 128 a b^{2} c + \\
&&352 a b^{2} - 16 a b c^{2} + 64 a b c + 128 a b - 4 a c^{3} - 8 a c^{2} + 768 a c +\\ &&+640 a + 48 b^{4} - 64 b^{3} c - 256 b^{3} + 32 b^{2} c^{2} + 288 b^{2} c + 384 b^{2} -\\
&&-8 b c^{3} - 144 b c^{2} - 512 b c + c^{4} + 24 c^{3} + 96 c^{2} - 640 c - 256).
\end{eqnarray*}
Let us consider the case $c=2b-a.$
Denote by $e_{1,c},e_{2,c},e_{3,c},e_{4,c}$ the polynomials obtained by substituting $c=2b-a$ into $e_1,e_2,e_3$ and $e_4.$ 
Let us denote by $G_{c}$ the Gr\"obner basis for $<e_{1,c},e_{2,c},e_{3,c},e_{4,c}>$ and compute the ideal $I_{c,p,q}=G_{c}\cap \Q[a,b,d]$, i.e., we eliminate the variables $p, q$. We get that
$$
I_{c,p,q}=<(9b-12a-3d+5)^2 - 4(3a-2)^2+48d>.
$$ 
The equation $(9b-12a-3d+5)^2 -4(3a-2)^2+48d=0$ defines the curve, say $C$, defined over $\Q(a)$ of genus 0 (in the plane $(b,d)$). The standard method allows us to find the parametrization of $C$ in the following form
$$
b=\frac{1}{36}(9a^2+36a-16-8u-u^2),\quad d=\frac{1}{36}(9a^2+36a-16+8u-u^2).
$$
However, with $b, d$ given above and the corresponding $c=2b-a$ we get 
$$
f(x)=\frac{1}{36}(6x+u+3a-2)\left(6x^3+(3a-u+2)x^2+2(3a-u-1)x+3(3a-u-2)\right),
$$
a reducible polynomial.

Let us consider the second factor that is
\begin{eqnarray}\label{defF}
	F(a,b,c)&=&233 a^{4} - 352 a^{3} b + 108 a^{3} c + 168 a^{3} + 368 a^{2} b^{2} - 264 a^{2} b c -\\\notag
	&&- 624 a^{2} b + 46 a^{2} c^{2} - 184 a^{2} c - 544 a^{2} - 160 a b^{3} + 128 a b^{2} c + \\\notag
	&&352 a b^{2} - 16 a b c^{2} + 64 a b c + 128 a b - 4 a c^{3} - 8 a c^{2} + 768 a c +\\ \notag
    &&+640 a + 48 b^{4} - 64 b^{3} c - 256 b^{3} + 32 b^{2} c^{2} + 288 b^{2} c + 384 b^{2} -\\\notag
	&&-8 b c^{3} - 144 b c^{2} - 512 b c + c^{4} + 24 c^{3} + 96 c^{2} - 640 c - 256.\notag
\end{eqnarray}
First we compute the polynomial for some small fixed values of $a$. It turns out that $F(2,b,c)$ is a reducible polynomial given by
$$
(12b^2-4bc-96b+c^2+12c+196)(4b^2-4bc-16b+c^2+4c+20).
$$
Let us study this special case when $a=2$. Consider the equation $12b^2-4bc-96b+c^2+12c+196=0$.
It follows that $(c-2b+6)^2+2(2b-9)^2=2$. The only integral solutions correspond with $b=4$ or $b=5$. If $b=4$, then $c=2$ and $d=3$. We obtain the reducible polynomial $x^4+2x^2+4x^2+2x+3=(x^2+1)(x^2+2x+3)$. If $b=5$, then $c=4$ and $d=2$, the example from Peth\H{o}'s paper. 
The set of rational solutions of $(c-2b+6)^2+2(2b-9)^2=2$ can be easily parametrized with
$$
b=\frac{8 t^2+5}{2 t^2+1},\quad c=\frac{4(t^2-t+1)}{2 t^2+1}.
$$ 
With $a=2$ and $b, c$ given above we easily compute the values 
\begin{eqnarray*}
	d&=&\frac{2 \left(3 t^2-2 t+1\right)}{2 t^2+1},\\
	p&=&\frac{4 \left(2 t^3-5 t^2+t-1\right)}{4 t^2+1},\\
	q&=&-\frac{2 (4 t-1) \left(3
   t^2-2 t+1\right)}{4 t^2+1}.
\end{eqnarray*}
With $p, q$ given above one can easily check that the discriminant of $x^2+px+q$ is positive for all $t\in\R$ (and thus for all $t\in\Q$).

Consider the other possibility, that is the equation $4b^2-4bc-16b+c^2+4c+20=0.$ We have
$$
(2b-c)^2+20=4(4b-c).
$$
Let $u=2b-c$ and $v=4b-c.$ We get that $v=\frac{u^2+20}{4}$
and $b=\frac{u^2-4u+20}{8}, c=\frac{u^2-8u+20}{4}.$ Thus
\begin{eqnarray*}
	b&=&2t^2+2,\\
	c&=&4t^2-4t+2,
\end{eqnarray*}
where $u=4t+2.$ Let us denote by $e_1',e_2',e_3'$ and $e_4'$ the corresponding polynomials $e_1,e_2,e_3$ and $e_4$ after the substitution $a=2, b=2t^2+2, c=4t^2-4t+2$. Let $G'$ be the Gr\"obner basis of the ideal $<e_1', e_2', e_3', e_4'>$ with respect to the variables $d, p, q$ over polynomial ring $\Q[t]$. We get that 
$$
G'\cap \Q[t][d]=<-t(1-d-4t+6t^2)(t^3+t^2-t-2), (d-6t^2+4t-1)(7+d+12t-6t^2-8t^3)>
$$
and thus $d=6t^2 - 4t + 1$ or $t=0$.
 
If $t=0,$ then $d=7$ and $f(x)$ is reducible $x^4+2x^3+2x^2+2x-7=(x-1)(x^3+3x^2+5x+7)$, a contradiction. If $d=6t^2 - 4t + 1$, then we have an infinite family of solutions of Peth\H{o}'s question given by
\begin{eqnarray*}
	a&=&2,\\
	b&=&2t^2+2,\\
	c&=&4t^2-4t+2,\\
	d&=&6t^2-4t+1,\\
	p&=&-\frac{6 \, t^{2} - 6 \, t + 1}{t^{2} - t},\\
	q&=&\frac{18 \, t^{3} - 18 \, t^{2} + 7 \, t - 1}{2 \, {\left(t^{3} - t^{2}\right)}}.
\end{eqnarray*}
It follows from Lemma \ref{f1} that there are infinitely many irreducible polynomial in this family. By computing the discriminant of the polynomial $x^2+px+q$ we observe that it has two real roots for $t\in\Q$ satisfying $t\in (1-\sqrt{2}/2, 1+\sqrt{2}/2) \setminus \{1\}$.

We computed all integral solutions of the equation $F(a,b,c)$ with $-200\leq a,b\leq 200.$ If $a=2,$ then we have all solutions provided by the above formulas and we also obtain $a=b=c=2$ and $d=-7.$ The corresponding polynomial is reducible, it is $(x-1)(x^3 + 3x^2 + 5x + 7).$ The remaining solutions are contained in Table 1.
\begin{equation*}
\begin{array}{|l|l|l|}
\hline
 (-30, 197, 420, 706)   &            (-12, 26, 60, 121) &    (6, 17, 24, 22) \\

(-28, 170, 364, 617)   &            (-10, 17, 40, 86)  &    (8, 26, 40, 41)\\

(-26, 145, 312, 534)   &            (-8, 10, 24, 57)   &    (10, 37, 60, 66)\\

(-24, 122, 264, 457)   &           (-6, 5, 12, 34)    &    (12, 50, 84, 97)\\

(-22, 101, 220, 386)   &            (-4, 2, 4, 17)     &    (14, 65, 112, 134)\\

(-20, 82, 180, 321)    &            (-2, 1, 0, 6)      &   (16, 82, 144, 177)\\

(-18, 65, 144, 262)    &            (0, 2, 0, 1)       &   (18, 101, 180, 226) \\

(-16, 50, 112, 209)    &            (2, 5, 4, 2)       &  \\

(-14, 37, 84, 162)     &            (4, 10, 12, 9)     &\\
\hline
\end{array}
\end{equation*}
\begin{center}
Table 1. Integral solutions of the equation $F(a,b,c)$ with $-200\leq a,b\leq 200$.
\end{center}

All these solutions can be described by the formulas
\begin{eqnarray}\label{secondpar}
	a&=&2t,\\\notag
	b&=&t^2+2t+2,\\\notag
	c&=&2t^2+2t,\\\notag
	d&=&3t^2-2t+1,\\\notag
	p&=&-\frac{2 \, {\left(3 \, t^{2} - 5 \, t + 4\right)}}{t^{2} - 2 \, t + 2},\\\notag
	q&=&\frac{9 \, t^{3} - 12 \, t^{2} + 7 \, t - 2}{t^{3} - 2 \, t^{2} + 2 \, t}.\notag
\end{eqnarray}
It follows from Lemma \ref{f2} that there are infinitely many irreducible polynomial in this family.
By computing the discriminant of the polynomial $x^2+px+q$ we observe that it has two real roots for $t\in\Q$ satisfying $t\in (0,2)$.

\end{proof}

\begin{rem}
{\rm We extended the search of the solutions of $F(a,b,c)=0$ up to $-10^4\leq a,b\leq 10^4$ and found no additional solutions. }
\end{rem}

\begin{rem}
One can prove that the polynomial $f(x)=x^4+ax^3+bx^2+cx+d$ with
$$
a=2,\quad  b=\frac{8 t^2+5}{2 t^2+1},\quad c=\frac{4(t^2-t+1)}{2 t^2+1}, \quad d=\frac{2 \left(3 t^2-2 t+1\right)}{2 t^2+1}.
$$
has no rational roots for all $t\in\Q$. However, if $t=(2 - s^2)/(4s)$, where $s\in\Q\setminus\{0\}$, then
$$
f(x)=\left(x^2+\frac{4}{s^2+2}x+\frac{s^2+4 s+6}{s^2+2}\right)\left(x^2+\frac{2 s^2}{s^2+2}x+\frac{3 s^2-4 s+2}{s^2+2}\right).
$$
\end{rem}

Let $\mathbb{P}$ be the set of prime numbers, $S\subset \mathbb{P}\cup\{\infty\}$, and recall that a rational number $r=r_1/r_2\in\Q, \gcd(r_1,r_2)=1$, is called $S$-integral if the set of prime factors of $r_2$ is a subset of $S$. The set of $S$-integers is denoted by $\Z_{S}$.

Although we were unable to prove that there are infinitely many quartic algebraic integers $\alpha$ such that the number $\beta=4\alpha^4/(1-\alpha^4)-\alpha/(\alpha-1)$  is real quadratic, from our result we can deduce the following:

\begin{cor}
Let $S\subset \mathbb{P}$. Then there are infinitely many $a,b,c,d\in\Z_{S}$ such that for one of the roots of $x^4+ax^3+bx^2+cx+d=0$, say $\alpha$, the number $\beta$ is real quadratic.
\end{cor}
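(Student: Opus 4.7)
The plan is to reuse the explicit second infinite family (\ref{secondpar}) produced in the proof of Theorem~\ref{6th}: for any $t\in\Q\cap(0,2)$, the quartic $f(x) = x^4+ax^3+bx^2+cx+d$ with $a=2t$, $b=t^2+2t+2$, $c=2t^2+2t$, $d=3t^2-2t+1$ admits a root $\alpha$ whose associated $\beta$ is real and annihilated by $x^2+p(t)x+q(t)$ with $p,q\in\Q(t)$ as in (\ref{secondpar}). Fix any prime $\ell\in S$ (the degenerate case $S=\emptyset$ reduces to the integer problem that the authors explicitly leave open) and set $t_n=1/\ell^n$ for $n=1,2,\ldots$. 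Each $t_n$ lies in $(0,1/2]\subset(0,2)$, the $t_n$ are pairwise distinct, and the values of $a,b,c,d$ at $t=t_n$ have denominators that are powers of $\ell$, hence belong to $\Z_S$. This produces infinitely many distinct quadruples $(a,b,c,d)\in\Z_S^4$ for which the corresponding $\beta$ is real.

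The only remaining step is to check that $\beta$ is genuinely of degree two over $\Q$, i.e., that the discriminant $\Delta(t_n):=p(t_n)^2-4q(t_n)$ is not a rational square. A direct simplification of the formulas for $p,q$ in (\ref{secondpar}) should yield the compact expression
$$
\Delta(t) \;=\; \frac{-8(t-2)}{t\,(t^2-2t+2)^2},
$$
so that $\Delta(t_n)$ is a square in $\Q$ iff $8(2\ell^n-1)$ is a square in $\Z$. A short $2$-adic argument rules this out: $8(2\ell^n-1)=m^2$ forces $4\mid m$, whence $2\ell^n-1=2(m/4)^2$, which is impossible because the left-hand side is odd. Hence $\beta$ is a real quadratic number for every $n$, and the corollary follows at once.

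The one nontrivial computation is the simplification of $\Delta(t)$ into a linear-in-$t$ factor divided by a perfect square of a rational function; this is the main step but is entirely routine. If one prefers to avoid this simplification, Hilbert irreducibility applied to $\Delta(t)\in\Q(t)$ (which is non-constant and not a square) would produce infinitely many $n$ with $\Delta(t_n)$ a non-square, but the explicit $2$-adic argument above is cleaner and fully elementary.
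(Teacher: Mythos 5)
Your proposal is correct and follows essentially the same route as the paper: both use the parametrization (\ref{secondpar}) and choose infinitely many $S$-integral values of $t$ in $(0,2)$ (the paper simply takes $t\in\Z_S\cap(0,2)$, which requires $S\neq\emptyset$ exactly as you observe). Your explicit simplification $\Delta(t)=p(t)^2-4q(t)=\frac{-8(t-2)}{t\,(t^2-2t+2)^2}$ checks out, and the $2$-adic argument showing $\Delta(1/\ell^n)$ is a positive non-square is a welcome completion of a step the paper leaves implicit, namely that $\beta$ is genuinely quadratic rather than rational.
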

\begin{proof}
In order to get the result it is enough to use the parametrization (\ref{secondpar}) by taking $t\in\Z_{S}$ satisfying the condition $t\in (0,2)$. Because there are infinitely many such $t$'s we get the result.

\end{proof}


\begin{rem}
{\rm
Let us note that the equation $F(a,b,c)=0$, where $F$ is given by (\ref{defF}),  defines (an affine) quartic surface, say $V$. The existence of the parametric solution presented above leads to the generic point (by taking $t=a/2$):
$$
(a,b,c)=\left(a,\frac{a^2}{4}+a+2,\frac{a^2}{2}+a\right)
$$
lying on $V$. This suggests to look on $V$ as on a {\it quartic curve} defined over the rational function field $\Q(a)$. We call this curve $\cal{C}$. A quick computation in MAGMA \cite{MAGMA} reveals that the genus of $\cal{C}$ is 0. This implies that $\cal{C}$ is $\overline{\Q(a)}$-rational curve. Moreover, the existence of $\Q(a)$-rational point on $\cal{C}$ given by $P=\left(\frac{a^2}{4}+a+2,\frac{a^2}{2}+a\right)$ allows us to compute rational parametrization which is defined over $\Q(a)$ as follows

\begin{eqnarray*}
b(t)&=&\frac{\sum_{i=0}^{6}bn_i(t)a^i}{\sum_{i=0}^{4}bd_i(t)a^i},\\ c(t)&=&\frac{\sum_{i=0}^{6}cn_i(t)a^i}{\sum_{i=0}^{4}cd_i(t)a^i},\\
d(t)&=&\frac{\sum_{i=0}^{6}dn_i(t)a^i}{\sum_{i=0}^{4}dd_i(t)a^i}.
\end{eqnarray*}
where $t\in\mathbb{Q}$ and $bn_i(t),bd_i(t)$ are given by
\begin{center}
\tiny
\begin{tabular}{|l|l|l|}
	\hline
	$i$ & $bn_i(t)$ & $bd_i(t)$\\
	\hline
	\hline
	$0$ & $663552 \, t^{4} - 2211840 \, t^{3} + 2764800 \, t^{2} - 1536000 \, t + 320000$ & $331776 \, t^{4} - 1105920 \, t^{3} + 1382400 \, t^{2} - 768000 \, t + 160000$ \\
	$1$ & $-331776 \, t^{4} + 1050624 \, t^{3} - 1244160 \, t^{2} + 652800 \, t - 128000$ & $-331776 \, t^{4} + 1050624 \, t^{3} - 1244160 \, t^{2} + 652800 \, t - 128000$ \\
	$2$ & $-41472 \, t^{3} + 105984 \, t^{2} - 90240 \, t + 25600$ & $124416 \, t^{4} - 373248 \, t^{3} + 419328 \, t^{2} - 209280 \, t + 39200$ \\
	$3$ & $38016 \, t^{3} - 89280 \, t^{2} + 69696 \, t - 18080$ & $-20736 \, t^{4} + 58752 \, t^{3} - 62784 \, t^{2} + 30048 \, t - 5440$ \\
	$4$ & $12960 \, t^{4} - 47520 \, t^{3} + 62928 \, t^{2} - 36240 \, t + 7748$ & $1296 \, t^{4} - 3456 \, t^{3} + 3528 \, t^{2} - 1632 \, t + 288$ \\
	$5$ & $-3888 \, t^{4} + 11664 \, t^{3} - 13248 \, t^{2} + 6792 \, t - 1332$ & $0$ \\
	$6$ & $324 \, t^{4} - 864 \, t^{3} + 900 \, t^{2} - 432 \, t + 81$ & $0$ \\
	\hline
\end{tabular}	
\end{center}
$cn_i(t)$ and $cd_i(t)$ are as follows
\begin{center}
	\tiny
\begin{tabular}{|l|l|l|}
	\hline
	$i$ & $cn_i(t)$ & $cd_i(t)$\\
	\hline
	\hline
	$0$ & $0$ & $165888 \, t^{4} - 552960 \, t^{3} + 691200 \, t^{2} - 384000 \, t + 80000$ \\
	$1$ & $165888 \, t^{4} - 552960 \, t^{3} + 691200 \, t^{2} - 384000 \, t + 80000$ & $-165888 \, t^{4} + 525312 \, t^{3} - 622080 \, t^{2} + 326400 \, t - 64000$ \\
	$2$ & $-82944 \, t^{4} + 235008 \, t^{3} - 241920 \, t^{2} + 105600 \, t - 16000$ & $62208 \, t^{4} - 186624 \, t^{3} + 209664 \, t^{2} - 104640 \, t + 19600$ \\
	$3$ & $-20736 \, t^{4} + 86400 \, t^{3} - 126720 \, t^{2} + 79296 \, t - 18080$ & $-10368 \, t^{4} + 29376 \, t^{3} - 31392 \, t^{2} + 15024 \, t - 2720$ \\
	$4$ & $20736 \, t^{4} - 66528 \, t^{3} + 79920 \, t^{2} - 42744 \, t + 8620$ & $648 \, t^{4} - 1728 \, t^{3} + 1764 \, t^{2} - 816 \, t + 144$ \\
	$5$ & $-4536 \, t^{4} + 13176 \, t^{3} - 14580 \, t^{2} + 7308 \, t - 1404$ & $0$ \\
	$6$ & $324 \, t^{4} - 864 \, t^{3} + 900 \, t^{2} - 432 \, t + 81$ & $0$ \\
	\hline
\end{tabular}
\end{center}
$dn_i(t)$ and $dd_i(t)$ are given by
\begin{center}
	\tiny
	\begin{tabular}{|l|l|l|}
		\hline
		$i$ & $cn_i(t)$ & $cd_i(t)$\\
		\hline
		\hline
	$0$ & $331776 \, t^{4} - 1105920 \, t^{3} + 1382400 \, t^{2} - 768000 \, t + 160000$ & $331776 \, t^{4} - 1105920 \, t^{3} + 1382400 \, t^{2} - 768000 \, t + 160000$ \\
	$1$ & $-663552 \, t^{4} + 2211840 \, t^{3} - 2764800 \, t^{2} + 1536000 \, t - 320000$ & $-331776 \, t^{4} + 1050624 \, t^{3} - 1244160 \, t^{2} + 652800 \, t - 128000$ \\
	$2$ & $705024 \, t^{4} - 2350080 \, t^{3} + 2939904 \, t^{2} - 1635840 \, t + 341600$ & $124416 \, t^{4} - 373248 \, t^{3} + 419328 \, t^{2} - 209280 \, t + 39200$ \\
	$3$ & $-393984 \, t^{4} + 1271808 \, t^{3} - 1540224 \, t^{2} + 829536 \, t - 167680$ & $-20736 \, t^{4} + 58752 \, t^{3} - 62784 \, t^{2} + 30048 \, t - 5440$ \\
	$4$ & $115344 \, t^{4} - 353376 \, t^{3} + 407880 \, t^{2} - 210624 \, t + 41148$ & $1296 \, t^{4} - 3456 \, t^{3} + 3528 \, t^{2} - 1632 \, t + 288$ \\
	$5$ & $-16848 \, t^{4} + 48384 \, t^{3} - 52992 \, t^{2} + 26304 \, t - 5004$ & $0$ \\
	$6$ & $972 \, t^{4} - 2592 \, t^{3} + 2700 \, t^{2} - 1296 \, t + 243$ & $0$ \\
	\hline
\end{tabular}
\end{center}
The above parametrizations yield formulas for $p$ and $q$ as well, we have
\begin{eqnarray*}
	p(t)&=&\frac{\sum_{i=0}^{8}pn_i(t)a^i}{\sum_{i=0}^{8}pd_i(t)a^i},\\ q(t)&=&\frac{\sum_{i=0}^{8}qn_i(t)a^i}{\sum_{i=0}^{8}qd_i(t)a^i},
\end{eqnarray*}
where $pn_i(t)$ and $pd_i(t)$ are as follows
\begin{center}
	\tiny
	\begin{tabular}{|l|l|}
		\hline
		$i$ & $pn_i(t)$\\
		\hline
		\hline

			$0$ & $-764411904 \, t^{6} + 3853910016 \, t^{5} - 8095334400 \, t^{4} + 9068544000 \, t^{3} - 5713920000 \, t^{2} + 1920000000 \, t - 268800000$ \\
			$1$ & $1624375296 \, t^{6} - 8066138112 \, t^{5} + 16689659904 \, t^{4} - 18417991680 \, t^{3} + 11433369600 \, t^{2} - 3785472000 \, t + 522240000$ \\
			$2$ & $-1576599552 \, t^{6} + 7711801344 \, t^{5} - 15724855296 \, t^{4} + 17109688320 \, t^{3} - 10477670400 \, t^{2} + 3424128000 \, t - 466560000$ \\
			$3$ & $901767168 \, t^{6} - 4328681472 \, t^{5} + 8665989120 \, t^{4} - 9262688256 \, t^{3} + 5575491072 \, t^{2} - 1792177920 \, t + 240364800$ \\
			$4$ & $-328458240 \, t^{6} + 1538403840 \, t^{5} - 3007901952 \, t^{4} + 3143418624 \, t^{3} - 1852477056 \, t^{2} + 583908864 \, t - 76936320$ \\
			$5$ & $77262336 \, t^{6} - 350884224 \, t^{5} + 666600192 \, t^{4} - 678507840 \, t^{3} + 390510720 \, t^{2} - 120573504 \, t + 15612432$ \\
			$6$ & $-11384064 \, t^{6} + 49828608 \, t^{5} - 91598688 \, t^{4} + 90593856 \, t^{3} - 50882400 \, t^{2} + 15399264 \, t - 1963512$ \\
			$7$ & $956448 \, t^{6} - 4012416 \, t^{5} + 7116336 \, t^{4} - 6832512 \, t^{3} + 3747096 \, t^{2} - 1113696 \, t + 140292$ \\
			$8$ & $-34992 \, t^{6} + 139968 \, t^{5} - 239112 \, t^{4} + 222912 \, t^{3} - 119556 \, t^{2} + 34992 \, t - 4374$ \\
	
	\hline	
\end{tabular}
\end{center}
\begin{center}
	\tiny
	\begin{tabular}{|l|l|}
		\hline
		$i$ & $pd_i(t)$\\
		\hline
		\hline
	$0$ & $191102976 \, t^{6} - 955514880 \, t^{5} + 1990656000 \, t^{4} - 2211840000 \, t^{3} + 1382400000 \, t^{2} - 460800000 \, t + 64000000$ \\
	$1$ & $-382205952 \, t^{6} + 1879179264 \, t^{5} - 3849928704 \, t^{4} + 4206919680 \, t^{3} - 2586009600 \, t^{2} + 847872000 \, t - 115840000$ \\
	$2$ & $346374144 \, t^{6} - 1676132352 \, t^{5} + 3381460992 \, t^{4} - 3640578048 \, t^{3} + 2206264320 \, t^{2} - 713625600 \, t + 96256000$ \\
	$3$ & $-185131008 \, t^{6} + 879869952 \, t^{5} - 1744478208 \, t^{4} + 1847079936 \, t^{3} - 1101689856 \, t^{2} + 351010560 \, t - 46678400$ \\
	$4$ & $63452160 \, t^{6} - 294865920 \, t^{5} + 572209920 \, t^{4} - 593720064 \, t^{3} + 347511168 \, t^{2} - 108828288 \, t + 14251040$ \\
	$5$ & $-14183424 \, t^{6} + 64074240 \, t^{5} - 121124160 \, t^{4} + 122713920 \, t^{3} - 70316928 \, t^{2} + 21620544 \, t - 2788424$ \\
	$6$ & $2006208 \, t^{6} - 8755776 \, t^{5} + 16052256 \, t^{4} - 15836256 \, t^{3} + 8873352 \, t^{2} - 2679360 \, t + 340884$ \\
	$7$ & $-163296 \, t^{6} + 684288 \, t^{5} - 1212408 \, t^{4} + 1162944 \, t^{3} - 637200 \, t^{2} + 189216 \, t - 23814$ \\
	$8$ & $5832 \, t^{6} - 23328 \, t^{5} + 39852 \, t^{4} - 37152 \, t^{3} + 19926 \, t^{2} - 5832 \, t + 729$ \\
		\hline	
\end{tabular}
\end{center}
finally, the formulas for $qn_i(t)$ and $qd_i(t)$
\begin{center}
	\tiny
	\begin{tabular}{|l|l|}
		\hline
		$i$ & $qn_i(t)$\\
		\hline
		\hline
	$0$ & $-382205952 \, t^{6} + 1911029760 \, t^{5} - 3981312000 \, t^{4} + 4423680000 \, t^{3} - 2764800000 \, t^{2} + 921600000 \, t - 128000000$ \\
	$1$ & $1242169344 \, t^{6} - 6210846720 \, t^{5} + 12939264000 \, t^{4} - 14376960000 \, t^{3} + 8985600000 \, t^{2} - 2995200000 \, t + 416000000$ \\
	$2$ & $-1934917632 \, t^{6} + 9650700288 \, t^{5} - 20059840512 \, t^{4} + 22242263040 \, t^{3} - 13875148800 \, t^{2} + 4617216000 \, t - 640320000$ \\
	$3$ & $1821450240 \, t^{6} - 9005727744 \, t^{5} + 18560544768 \, t^{4} - 20410417152 \, t^{3} + 12630919680 \, t^{2} - 4170854400 \, t + 574144000$ \\
	$4$ & $-1091377152 \, t^{6} + 5298628608 \, t^{5} - 10725198336 \, t^{4} + 11586309120 \, t^{3} - 7046019072 \, t^{2} + 2287269120 \, t - 309668800$ \\
	$5$ & $422143488 \, t^{6} - 1995010560 \, t^{5} + 3934065024 \, t^{4} - 4144690944 \, t^{3} + 2461317120 \, t^{2} - 781454784 \, t + 103672320$ \\
	$6$ & $-104789376 \, t^{6} + 478690560 \, t^{5} - 914397984 \, t^{4} + 935521920 \, t^{3} - 541037520 \, t^{2} + 167812512 \, t - 21823272$ \\
	$7$ & $16119648 \, t^{6} - 70777152 \, t^{5} + 130483872 \, t^{4} - 129400416 \, t^{3} + 72863136 \, t^{2} - 22105152 \, t + 2825172$ \\
	$8$ & $-1399680 \, t^{6} + 5878656 \, t^{5} - 10437336 \, t^{4} + 10031040 \, t^{3} - 5506488 \, t^{2} + 1638144 \, t - 206550$ \\
		\hline
\end{tabular}
\end{center}

\begin{center}
	\tiny
	\begin{tabular}{|l|l|}
		\hline
		$i$ & $qd_i(t)$\\
		\hline
		\hline
$0$ & $0$ \\
$1$ & $191102976 \, t^{6} - 955514880 \, t^{5} + 1990656000 \, t^{4} - 2211840000 \, t^{3} + 1382400000 \, t^{2} - 460800000 \, t + 64000000$ \\
$2$ & $-382205952 \, t^{6} + 1879179264 \, t^{5} - 3849928704 \, t^{4} + 4206919680 \, t^{3} - 2586009600 \, t^{2} + 847872000 \, t - 115840000$ \\
$3$ & $346374144 \, t^{6} - 1676132352 \, t^{5} + 3381460992 \, t^{4} - 3640578048 \, t^{3} + 2206264320 \, t^{2} - 713625600 \, t + 96256000$ \\
$4$ & $-185131008 \, t^{6} + 879869952 \, t^{5} - 1744478208 \, t^{4} + 1847079936 \, t^{3} - 1101689856 \, t^{2} + 351010560 \, t - 46678400$ \\
$5$ & $63452160 \, t^{6} - 294865920 \, t^{5} + 572209920 \, t^{4} - 593720064 \, t^{3} + 347511168 \, t^{2} - 108828288 \, t + 14251040$ \\
$6$ & $-14183424 \, t^{6} + 64074240 \, t^{5} - 121124160 \, t^{4} + 122713920 \, t^{3} - 70316928 \, t^{2} + 21620544 \, t - 2788424$ \\
$7$ & $2006208 \, t^{6} - 8755776 \, t^{5} + 16052256 \, t^{4} - 15836256 \, t^{3} + 8873352 \, t^{2} - 2679360 \, t + 340884$ \\
$8$ & $-163296 \, t^{6} + 684288 \, t^{5} - 1212408 \, t^{4} + 1162944 \, t^{3} - 637200 \, t^{2} + 189216 \, t - 23814$ \\
\hline
\end{tabular}
\end{center}
The reader interested in the details of mathematics behind the computation of parametrizations of rational curves can consult the excellent book of Rafael Sendra, Winkler and P\'{e}rez-D\'{i}az \cite{RATCurves}.
Let us also note that for $p,q$ given above the discriminant of $P(x)=x^2+px+q$ takes the form
$$
\operatorname{Disc}(P)=-2a P_1(a,t)\cdot P_{2}(a,t) Q(a,t)^2,
$$
where $Q$ is the rational function, $P_{2}$ is the polynomial of degree 2 (with respect to the variable $t$) with negative discriminant for $a\in\R\setminus\{4\}$ and
$$
P_1(a,t)=(9a^3-116a^2+524a-800)t^2-24(a-5)(a-4)^2t+18 (a-4)^3.
$$
We thus see that the polynomial $P(x)$ will have two real roots iff $-aP_1(a,t)>0$ and $Q(a,t)\neq 0$. We observe that if $a<0$ then $-aP(a,t)$ is always negative and we get no solutions. Indeed, if $a<0$ then $P(a,t)$ need to be positive. However, $9a^3-116a^2+524a-800<0$ and $\operatorname{Disc}_{t}(P_1)=-72(a-4)^3(a-2)^2a<0$ and thus $P_1(a,t)<0$ for all $a,t\in\R$. If $a>0$ and $a\neq 4$ there are solutions but the analytic expressions are quite complicated. Instead, in Figure \ref{fig:disc}, we present a plot of the solutions of the system $-aP(a,t)>0 \wedge Q(a,t)\neq 0$ satisfying $(a,t)\in\ [0,10]\times [-10,10]$. In particular, if $a\in\ (0,2)$ and $t\in\Q$ we get solutions we are interested in. 
Unfortunately, we were no able to characterize all pairs $(a,t)$ such that the corresponding polynomial $f(x)=x^4+ax^3+b(t)x^2+c(t)x+d(t)$ is irreducible. It seems that this is rather difficult question.
 
Finally, if $a=4$ then we get $(b,c,d,p,q)=(46/3, 20, 25, 165/26, 525/52)$ and the polynomial $x^2+px+q$ has complex roots.
\begin{figure}[htbp] 
       \centering
         \includegraphics[width=3in]{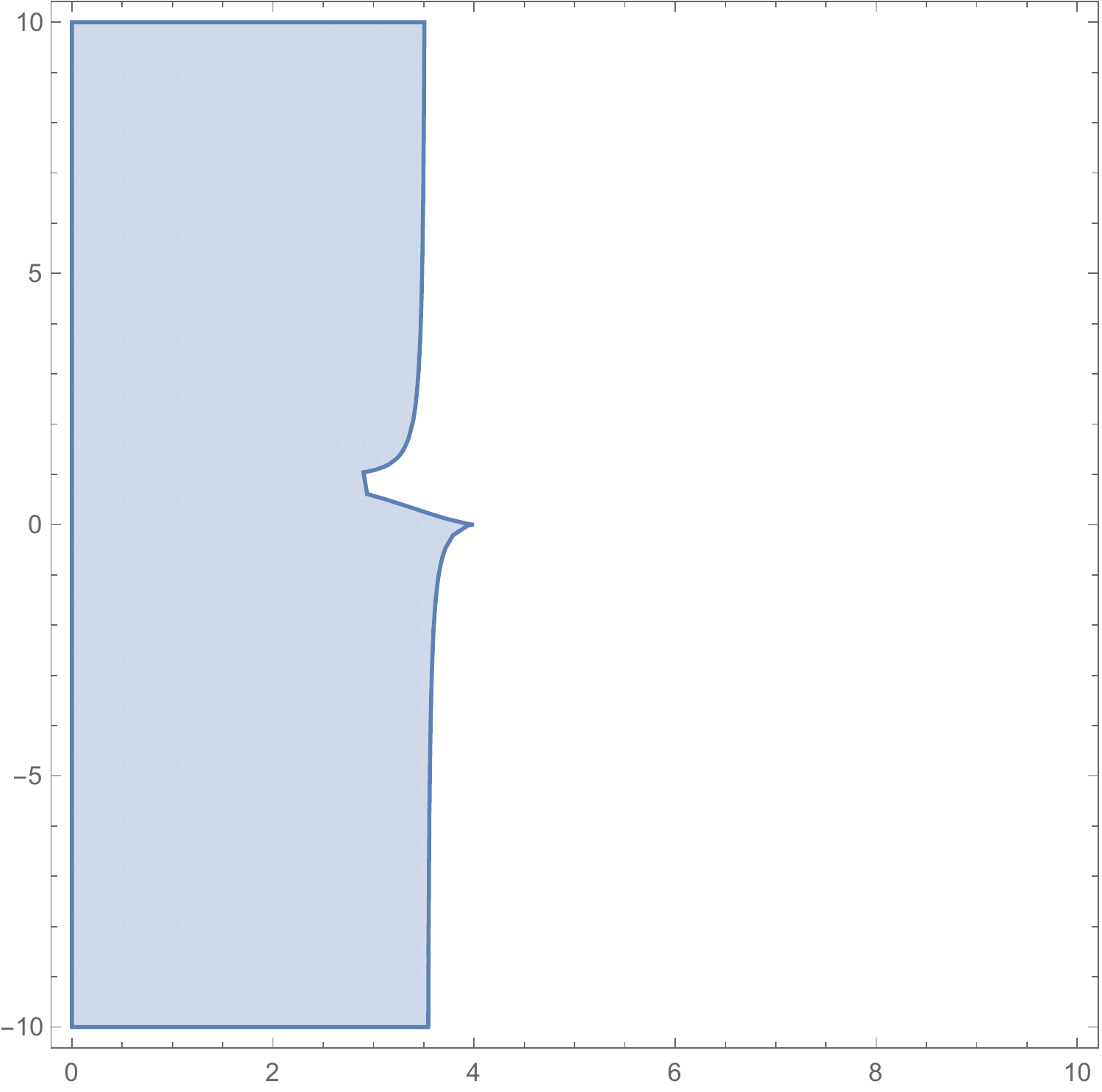}
        \caption{Real solutions of the inequality $-aP_1(a,t)>0, (a,t)\in [0,10]\times [-10,10]$, are in shaded region}
       \label{fig:disc}
    \end{figure}

We were trying to use the obtained parametrization to find other integer points on the surface $V$ but without success. If $\alpha$ is not an algebraic integer, then using the above parametrizations we may obtain real quadratic algebraic numbers. Indeed, if $\alpha$ is a root of the polynomial $x^4+ax^3+bx^2+cx+d$ then write $\beta=\frac{4\alpha^4}{\alpha^4-1}-\frac{\alpha}{\alpha-1}$. As an example let us consider the case $a=1, t=1$. The above formulas provide that $\alpha$ is a root of the polynomial
$$
x^4 + x^3 + 97/24x^2 + 3/4x + 17/8
$$
then $\beta$ is a root of the following polynomial having two real roots
$$
x^2 - 6/13x - 51/5.
$$

We can also notice "near misses" solutions of Peth\H{o}'s problem, where among the numbers $a, b, c, d$ only one is genuine rational. All these solutions correspond to $a=2$. More precisely, if $\alpha$ is solution of
$$
x^4+2x^3+\frac{14}{3}x^2+2x+1
$$
then $\beta$ is root of the polynomial
$$
x^2+3x-\frac{3}{4}.
$$
Similarly, if $\alpha$ is a root of
$$
x^4+2x^3+\frac{13}{3}x^2+4x+4
$$
then $\beta$ is a root of
$$
x^2+\frac{36}{5}x+12.
$$
}
\end{rem}
\bibliography{all}
\end{document}